\newtheorem{thm}{Theorem}[section]
\newtheorem{lem}[thm]{Lemma}
\newtheorem{prop}[thm]{Proposition}
\theoremstyle{definition}
\newtheorem{rem}[thm]{Remark}
\numberwithin{equation}{section}
\def\la{\lambda}
\title{Total positivity of Riordan arrays
}
\author{Xi Chen, Huyile Liang and Yi Wang
\thanks{{\it Email address:}\quad wangyi@dlut.edu.cn (Y. Wang)}}
\date{\footnotesize School of Mathematical Sciences, Dalian University of Technology, Dalian 116024, PR China}
\begin{document}

\maketitle

\begin{abstract}
We present sufficient conditions for total positivity of Riordan arrays.
As applications we show that many well-known combinatorial triangles are totally positive
and many famous combinatorial numbers are log-convex in a unified approach.
\\[1pt]
{\sl MSC:}\quad 05A20; 15B36; 15A45
\\
{\sl Keywords:}\quad Riordan array; Totally positive matrix; Log-convex sequence; Log-concave sequence
\end{abstract}
\section{Introduction}
\hspace*{\parindent}
Riordan arrays play an important unifying role in enumerative combinatorics
\cite{SGWW91}.
A {\it (proper) Riordan array}, denoted by $(g(x),f(x))$, is an infinite lower triangular matrix
whose generating function of the $k$th column is $x^kf^k(x)g(x)$ for $k=0,1,2,\ldots$,
where $g(0)=1$ and $f(0)\neq 0$.
A Riordan array $R=[r_{n,k}]_{n,k\ge 0}$
can also be characterized by two sequences
$(a_n)_{n\ge 0}$ and $(z_n)_{n\ge 0}$ such that
\begin{equation}\label{rrr-c}
r_{0,0}=1,\quad r_{n+1,0}=\sum_{j\ge 0}z_jr_{n,j},\quad r_{n+1,k+1}=\sum_{j\ge 0}a_jr_{n,k+j}
\end{equation}
for $n,k\ge 0$ (see \cite{CKS12,HS09,MRSV97,Spr94} for instance).
Call $(a_n)_{n\ge 0}$ and $(z_n)_{n\ge 0}$
the {\it $A$}- and {\it $Z$-sequences} of $R$ respectively.

Many triangles in combinatorics are Riordan arrays with simple $A$- and $Z$-sequences.
For example,
the Pascal triangle with $Z=(1,0,\ldots)$ and $A=(1,1,0,\ldots)$,
the Catalan triangle with $Z=(2,1,0\ldots)$ and $A=(1,2,1,0,\ldots)$~\cite{Sha76},
the Motzkin triangle with $Z=(1,1,0,\ldots)$ and $A=(1,1,1,0,\ldots)$~\cite{Aig99},
the ballot table with $Z=A=(1,1,1,\ldots)$\cite{Aig01},
the large Schr\"oder triangle with $Z=(2,2,2,\ldots)$ and $A=(1,2,2,\ldots)$~\cite{CKS12},
and the little Schr\"oder triangle with $Z=A=(1,2,2,\ldots)$~\cite{CKS12}.
Such triangles arise often in the enumeration of lattice paths,
e.g., the Dyck paths, the Motzkin paths, and the Schr\"{o}der paths and so on \cite{CKS12,MRSV97,SGWW91}.
The $0$th column of such an array counts the corresponding lattice paths,
including the Catalan numbers, the Motzkin numbers, the large and little Schr\"{o}der numbers.
There have been quite a few papers concerned with combinatorics of Riordan arrays
(see \cite{CKS12,HS09,MRSV97,Rog78,SGWW91,Spr94} for instance).
Our concern in the present paper is total positivity of Riordan arrays.

Following Karlin~\cite{Kar68},
an infinite matrix is called {\it totally positive of order $r$} (or shortly, {\it TP$_r$}),
if its minors of all orders $\le r$ are nonnegative.
The matrix is called {\it TP} if its minors of all orders are nonnegative.
Let $(a_n)_{n\ge 0}$ be an infinite sequence of nonnegative numbers.
It is called a {\it P\'olya frequency sequence of order $r$} (or shortly, a {\it PF$_r$} sequence),
if its Toeplitz matrix
$$[a_{i-j}]_{i,j\ge 0}=\left[
\begin{array}{lllll}
a_{0} &  &  &  &\\
a_{1} & a_{0} & &\\
a_{2} & a_{1} & a_{0} &  &\\
a_{3} & a_{2} & a_{1} & a_{0} &\\
\vdots & &\cdots & & \ddots\\
\end{array}
\right]$$
is TP$_r$.
It is called {\it PF} if its Toeplitz matrix is TP.
We say that a finite sequence $a_0,a_1,\ldots, a_n$ is PF$_r$ (PF, resp.)
if the corresponding infinite sequence $a_0,a_1,\ldots,a_n,0,\ldots$ is PF$_r$ (PF, resp.).
A fundamental result of Aissen, Schoenberg and Whitney states that
a finite sequence of nonnegative numbers is PF
if and only if its generating function has only real zeros
(see \cite[p. 399]{Kar68} for instance).
For example, the sequence $(r,s,t)$ of nonnegative numbers is PF if and only if $s^2\ge 4rt$.
We say that a nonnegative sequence $(a_n)$ is {\it log-convex} ({\it log-concave}, resp.)
if $a_{i}a_{j+1}\ge a_{i+1}a_{j}$ ($a_{i}a_{j+1}\le a_{i+1}a_{j}$, resp.) for $0\le i<j$.
Clearly, the sequence $(a_n)$ is log-concave if and only if it is PF$_2$,
i.e., its Toeplitz matrix $[a_{i-j}]_{i,j\ge 0}$ is TP$_2$,
and the sequence is log-convex if and only if its Hankel matrix $[a_{i+j}]_{i,j\ge 0}$ is TP$_2$~\cite{Bre89}.

There are often various total positivity properties in a Riordan array.
For example, the Pascal matrix is TP~\cite[p. 137]{Kar68}
and each row of it is log-concave (see \cite{SW08} for more information),
the Catalan numbers, the Motzkin numbers, the large and little Schr\"oder numbers form a log-convex sequence respectively~\cite{LW07b}.
However, there is no systematic study of total positivity of Riordan arrays.
The object of this paper is to study various positivity properties of Riordan arrays,
including the total positivity of such a matrix,
the log-convexity of the $0$th column and the log-concavity of each row.
The paper is organized as follows.
In the next section,
we present sufficient conditions for total positivity of Riordan arrays.
As applications,
we show that many well-known combinatorial triangles are totally positive
and many famous combinatorial numbers are log-convex in a unified approach.
In Section 3,
we propose some problems for further work.
\section{Main results and applications}
\hspace*{\parindent}
We first present a basic result about total positivity of Riordan arrays.
Let $R=[r_{n,k}]_{n,k\ge 0}$ be a Riordan array
defined by the recursive system (\ref{rrr-c}).
Call
\begin{equation*}\label{J-R}
J(R)=\left[
\begin{array}{cccccc}
z_0 & a_{0} &  &  &  &\\
z_1 & a_{1} & a_{0} & &\\
z_2 & a_{2} & a_{1} & a_{0} &  &\\
z_3 & a_{3} & a_{2} & a_{1} & a_{0} &\\
\vdots &\vdots & &\cdots & & \ddots\\
\end{array}
\right].
\end{equation*}
the {\it coefficient matrix} of the Riordan array $R$.

\begin{thm}\label{grr-thm}
Let $R$ be a Riordan array defined by (\ref{rrr-c}).
\begin{itemize}
  \item [\rm (i)] If the coefficient matrix $J(R)$ is TP$_r$ (TP, resp.), then so is $R$.
  \item [\rm (ii)] If $R$ is TP$_2$ and all $z_n\ge 0$,
  then the $0$th column $(r_{n,0})_{n\ge 0}$ of $R$ is log-convex.
\end{itemize}
\end{thm}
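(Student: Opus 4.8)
\emph{Part (i).} The plan is to turn the recursion (\ref{rrr-c}) into a single matrix identity and then induct on the leading principal submatrices of $R$. Writing $J=J(R)$, one checks directly that (\ref{rrr-c}) is equivalent to
\[
 r_{n+1,k}=\sum_{m\ge 0}r_{n,m}\,J_{m,k}\qquad(n,k\ge 0),
\]
that is, deleting the $0$th row of $R$ yields the matrix $RJ$ (the instance $k=0$ is the recursion for $r_{n+1,0}$; for $k\ge 1$ one substitutes $m=(k-1)+j$ into the recursion for $r_{n+1,k}$). Let $R_N$ denote the leading $(N+1)\times(N+1)$ block of $R$. Because $R$ is lower triangular, the identity above restricts to a factorization
\[
 R_N=\bigl([1]\oplus R_{N-1}\bigr)\,\widehat{J}_N ,
\]
where $[1]\oplus R_{N-1}$ is $R_{N-1}$ bordered by a top-left corner entry $1$ and zeros, and $\widehat{J}_N$ is the top-left $N\times(N+1)$ submatrix of $J$ with the row $(1,0,\dots,0)$ prepended.

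Three standard facts then close the induction: (a) every submatrix of a TP$_r$ matrix is TP$_r$; (b) prepending a unit row $(1,0,\dots,0)$, or bordering by a corner $1$ with zeros, preserves TP$_r$ — one verifies this by a short case analysis of a minor, which is a minor of the old matrix when it avoids the new line, and otherwise is either $0$ or, by Laplace expansion along the new line, a minor of the old matrix of one smaller order; and (c) a product of two TP$_r$ matrices is TP$_r$ (Cauchy--Binet). By (a) and the hypothesis that $J$ is TP$_r$, the factor $\widehat{J}_N$ is TP$_r$; by (b) and the inductive hypothesis on $R_{N-1}$, the factor $[1]\oplus R_{N-1}$ is TP$_r$; hence by (c) so is $R_N$. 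With base case $R_0=[1]$, this shows every $R_N$ is TP$_r$, and since each minor of $R$ sits inside some $R_N$, $R$ is TP$_r$; the TP statement is the same for all orders. I expect the one place needing care is keeping the index bookkeeping in the displayed factorization exact --- in particular that $\widehat{J}_N$ carries one more column than it has rows --- rather than any substantive obstacle.

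\emph{Part (ii).} Here I would argue straight from the definition, using only that $R$ is TP$_2$ and that each $z_\ell\ge 0$. Fix $0\le i<j$. Expanding the shifted entries by the $Z$-sequence and using bilinearity of the $2\times2$ determinant in its rows,
\[
 r_{i,0}\,r_{j+1,0}-r_{i+1,0}\,r_{j,0}=\sum_{\ell\ge 0}z_\ell\bigl(r_{i,0}r_{j,\ell}-r_{i,\ell}r_{j,0}\bigr).
\]
For $\ell\ge 1$ the bracketed term is the $2\times2$ minor of $R$ on rows $\{i,j\}$ and columns $\{0,\ell\}$, hence nonnegative since $R$ is TP$_2$; for $\ell=0$ it vanishes. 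As $z_\ell\ge 0$, the right-hand side is nonnegative, which is precisely the log-convexity of $(r_{n,0})_{n\ge 0}$ (the three-term form is the case $j=i+1$). This part presents no real obstacle and uses no positivity of $J(R)$.
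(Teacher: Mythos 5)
Your proof is correct and follows essentially the same route as the paper: part (i) uses the identical factorization $R_{N}=\bigl([1]\oplus R_{N-1}\bigr)\widehat{J}_N$ together with Cauchy--Binet and induction on leading principal submatrices (you merely make explicit the routine check that bordering by a unit row or corner preserves TP$_r$, which the paper leaves implicit), and part (ii) is the paper's Cauchy--Binet argument for the two-column matrix $[r_{n,0}\ \ r_{n+1,0}]_{n\ge 0}=R\cdot[e_1\ \ \zeta]$ written out entrywise as an expansion of the relevant $2\times 2$ minors. No gaps.
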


To prove Theorem~\ref{grr-thm},
we need two lemmas.
The first is direct by definition and
the second follows from the classic Cauchy-Binet formula.

\begin{lem}\label{lps-lem}
A matrix is TP$_r$ (TP, resp.) if and only if
its leading principal submatrices are all TP$_r$ (TP, resp.).
\end{lem}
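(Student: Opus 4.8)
The plan is to prove the two directions separately, the forward one being essentially trivial and the reverse one requiring a small limiting argument. First observe what "leading principal submatrix" means here: for an infinite matrix $M=[m_{i,j}]_{i,j\ge 0}$, its $n$th leading principal submatrix is $M_n=[m_{i,j}]_{0\le i,j\le n-1}$. The forward direction is immediate from the definition of TP$_r$: any minor of $M_n$ of order $s\le r$ is also a minor of $M$ of order $s$ (it is obtained by selecting rows and columns all with indices $<n$), so if $M$ is TP$_r$ then every such minor is nonnegative, hence $M_n$ is TP$_r$; the same argument with $r$ replaced by "all orders" gives the TP case.

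For the reverse direction, suppose every $M_n$ is TP$_r$ and we wish to conclude $M$ is TP$_r$. Take any minor of $M$ of order $s\le r$, say with row indices $i_1<\cdots<i_s$ and column indices $j_1<\cdots<j_s$. Choose $n$ large enough that $n>i_s$ and $n>j_s$; then this minor is a minor of $M_n$, which is TP$_r$ by hypothesis, so the minor is nonnegative. Since the minor was arbitrary, $M$ is TP$_r$. For the TP case the same choice of $n$ works for an order-$s$ minor for any $s$, since only finitely many rows and columns are involved in any single minor. The key point that makes this work is the standard one: total positivity is a condition on minors, and every individual minor of an infinite matrix involves only finitely many rows and columns, hence lives inside some finite leading principal submatrix.

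I do not expect any genuine obstacle here — the statement is, as the paper says, "direct by definition." The only thing to be slightly careful about is the indexing convention (whether leading principal submatrices are $n\times n$ for all $n$, which they are) and the observation that a minor with the largest index equal to $N$ sits inside the leading principal submatrix of size $N+1$; once that is made explicit, both implications are one line each. I would write the proof in three or four sentences, stating the two directions and the index-truncation observation, without any displayed computation.
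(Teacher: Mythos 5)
Your proof is correct and is exactly the "direct by definition" argument the paper has in mind (the paper offers no further detail, simply remarking that the lemma is immediate): the forward direction because minors of a leading principal submatrix are minors of the full matrix, and the reverse because any single minor involves only finitely many rows and columns and hence sits inside some leading principal submatrix. No issues.
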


\begin{lem}\label{prod-lem}
If two matrices are TP$_r$ (TP, resp.),
then so is their product.
\end{lem}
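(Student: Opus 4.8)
The plan is to read this off from the Cauchy--Binet formula. Let $A$ and $B$ be matrices whose product $C=AB$ is defined, and fix an integer $s$ with $1\le s\le r$ (for the TP case we impose no bound on $s$). Given a set $I=\{i_1<\cdots<i_s\}$ of row indices and a set $K=\{k_1<\cdots<k_s\}$ of column indices, write $C[I,K]$ for the $s\times s$ submatrix of $C$ lying in those rows and columns, and similarly $A[I,J]$, $B[J,K]$. The Cauchy--Binet formula states
$$\det C[I,K]=\sum_{J}\det A[I,J]\,\det B[J,K],$$
the sum extending over all $s$-element index sets $J$. If $A$ and $B$ are both TP$_r$, then each $\det A[I,J]$ and each $\det B[J,K]$ is a minor of order $s\le r$, hence nonnegative, so the right-hand side is a sum of nonnegative terms and $\det C[I,K]\ge 0$. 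As $I$, $K$ and $s\le r$ were arbitrary, $C$ is TP$_r$; running the same argument with $s$ unrestricted yields the TP case.

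The only delicate point is that the matrices relevant here---Riordan arrays and their coefficient matrices---are infinite, so a priori the Cauchy--Binet sum is infinite and one must check that it converges to $\det C[I,K]$. I would settle this by reducing to finite matrices. By Lemma~\ref{lps-lem} it is enough to show that each leading principal submatrix of $C$ is TP$_r$; for the lower-triangular matrices occurring in the applications the $n$th leading principal submatrix of $AB$ equals $A_nB_n$, the product of the $n$th leading principal submatrices of $A$ and $B$, and $A_n$, $B_n$ are themselves TP$_r$ by Lemma~\ref{lps-lem}, so the classical finite Cauchy--Binet formula applies verbatim. (Equivalently, the matrices in question are row-finite with nonnegative entries, so every Cauchy--Binet sum has only finitely many nonzero terms.) In short, the content is the one-line sign observation above, and the only obstacle---the passage to infinite matrices---is routine bookkeeping.
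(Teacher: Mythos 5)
Your proof is correct and is exactly the argument the paper intends: the paper simply cites the classical Cauchy--Binet formula for this lemma, which is your main sign observation. Your additional care about the infinite-matrix case (reducing to leading principal submatrices, or noting that the relevant sums are finite) is a reasonable extra precaution that the paper omits.
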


\begin{proof}[Proof of Theorem~\ref{grr-thm}]
(i)\quad
It suffices to show that $J(R)$ is TP$_r$ implies $R$ is TP$_r$.
Let
$$R_n=\left[
\begin{array}{ccccc}
r_{0,0} &  &  &  &  \\
r_{1,0} & r_{1,1} &  &  &  \\
r_{2,0} & r_{2,1} & r_{2,2} &  &  \\
\vdots & & & \ddots &  \\
r_{n,0} & r_{n,1} & r_{n,2} & \cdots & r_{n,n}
\end{array}
\right]$$
be the $n$th leading principal submatrix of $R$.
Then by Lemma~\ref{lps-lem},
it suffices to show that $R_n$ is TP$_r$ for $n\ge 1$.
We proceed by induction on $n$.
Assume that $R_n$ is TP$_r$.
By (\ref{rrr-c}), we have
$$\left[
\begin{array}{rrrr}
r_{0,0} & & & \\
r_{1,0} & r_{1,1} &  &\\
\vdots &  &  \ddots  & \\
r_{n+1,0} & r_{n+1,1} & \cdots & r_{n+1,n+1}\\
\end{array}
\right]\\
=
\left[
\begin{array}{rrrr}
1&&&\\
0&r_{0,0} &  &\\
\vdots & \vdots & \ddots  & \\
0&r_{n,0}&\cdots&r_{n,n}\\
\end{array}
\right]
\left[
\begin{array}{rrrr}
1 & & &\\
z_0 & a_{0} &  &\\
\vdots & \vdots &     \ddots  & \\
z_n&a_n&\cdots&a_0\\
\end{array}
\right],
$$
or briefly,
\begin{equation}\label{r=rj}
R_{n+1}=\left[\begin{array}{cc}1 & O\\ O & R_n\\\end{array}\right]
\left[\begin{array}{cc}1 & O\\ \zeta_n & \mathcal{A}_n\\\end{array}\right],
\end{equation}
where $\zeta_n=[z_0,z_1,\ldots,z_n]'$ and $\mathcal{A}_n=[a_{i-j}]_{0\le i,j\le n}$.
By the induction hypothesis, $R_n$ is TP$_r$,
so is the first matrix on the right hand side of (\ref{r=rj}).
On the other hand,
$[\zeta_n,\mathcal{A}_n]$ is TP$_r$ since it is a submatrix of the TP$_r$ matrix $J(R)$,
so is the second matrix on the right hand side of (\ref{r=rj}).
It follows from Lemma~\ref{prod-lem} that the product $R_{n+1}$ is TP$_r$.
Thus the matrix $R$ is TP$_r$.

(ii)\quad
Note that
\begin{eqnarray}\label{r2-dec}
\left[\begin{array}{cc}
r_{0,0} & r_{1,0}\\
r_{1,0} & r_{2,0}\\
r_{2,0} & r_{3,0}\\
\vdots & \vdots\\
\end{array}
\right]=
\left[
\begin{array}{ccccc}
r_{0,0} & & & & \\
r_{1,0} & r_{1,1} &  &  &  \\
r_{2,0} & r_{2,1} & r_{2,2} &  &  \\
\vdots & & & & \ddots   \\
\end{array}
\right]
\left[\begin{array}{cc}
1 & z_{0}\\
0 & z_{1}\\
0 & z_{2}\\
\vdots & \vdots\\
\end{array}
\right].
\end{eqnarray}
Clearly, the second matrix on the right hand side of (\ref{r2-dec}) is TP$_2$ since all $z_n$ are nonnegative.
Now $R$ is TP$_2$ by the assumption.
Hence the matrix on the left hand side of (\ref{r2-dec}) is TP$_2$ by Lemma~\ref{prod-lem}.
In other words, the sequence $(r_{n,0})_{n\ge 0}$ is log-convex.
\end{proof}

In the sequel we consider applications of Theorem~\ref{grr-thm} to two classes of special interesting Riordan arrays.
The first are recursive matrices introduced by Aigner \cite{Aig99,Aig01}.
Let $a,b,s,t$ be four nonnegative numbers.
Define a Riordan array $R(a,b;s,t)=[r_{n,k}]_{n,k\ge 0}$ by
$$r_{0,0}=1,\quad r_{n+1,0}=ar_{n,0}+br_{n,1},\quad r_{n+1,k}=r_{n,k-1}+sr_{n,k}+tr_{n,k+1}.$$
Following Aigner~\cite{Aig99,Aig01},
the numbers $C_n(a,b;s,t)=r_{n,0}$ are called the {\it Catalan-like numbers}.
Many well-known triangles are recursive matrices.
For example,
the Pascal triangle,
the Catalan triangle
and the Motzkin triangle
are $R(1,0;1,0),R(2,1;2,1)$ and $R(1,1;1,1)$ respectively.
Also, 
the Catalan-like numbers unify many famous counting coefficients,
such as the Catalan numbers $C_n(2,1;2,1)$,
the Motzkin numbers $C_n(1,1;1,1)$,
the central binomial coefficients $C_n(2,2;2,1)$,
and the large Schr\"oder numbers $C_n(2,2;3,2)$.
See \cite{Aig01} for details.

\begin{thm}\label{s-t}
Let $a,b,s,t$ be four nonnegative numbers.
\begin{itemize}
  \item [\rm (i)] If $as\ge b$ and $s^2\ge t$, then the sequence $(r_{n,0})_{n\ge 0}$ is log-convex.
  \item [\rm (ii)] If $s^2\ge 4t$ and $a\frac{s+\sqrt{s^2-4t}}{2}\ge b$,
                   then the matrix $R(a,b;s,t)$ is totally positive.
\end{itemize}
\end{thm}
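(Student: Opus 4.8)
The plan is to identify the coefficient matrix of $R(a,b;s,t)$ and then invoke Theorem~\ref{grr-thm}. Matching the defining recursion of $R(a,b;s,t)$ against (\ref{rrr-c}), this array is the Riordan array with $Z$-sequence $(a,b,0,0,\dots)$ and $A$-sequence $(1,s,t,0,0,\dots)$, so its coefficient matrix is the tridiagonal matrix
\[
J:=J\bigl(R(a,b;s,t)\bigr)=\begin{pmatrix}
a&1&&&\\
b&s&1&&\\
0&t&s&1&\\
0&0&t&s&1\\
&&&\ddots&\ddots
\end{pmatrix}.
\]
By Theorem~\ref{grr-thm}(i) it therefore suffices to show that $J$ is TP$_2$ in case (i) and that $J$ is TP in case (ii): if $J$ is TP$_2$ then $R(a,b;s,t)$ is TP$_2$, and since every $z_n\ge0$ the $0$th column $(r_{n,0})_{n\ge0}$ is log-convex by Theorem~\ref{grr-thm}(ii); and if $J$ is TP then $R(a,b;s,t)$ is TP.

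For part (i) I would check directly that every $2\times2$ minor of $J$ is nonnegative. All entries of $J$ are nonnegative, so a minor formed from rows $i_1<i_2$ and columns $j_1<j_2$ can be negative only if both off-diagonal entries $J_{i_1,j_2}$ and $J_{i_2,j_1}$ are positive; because $J$ is tridiagonal this forces $j_2\le i_1+1$ and $j_1\ge i_2-1$, which together with $i_1<i_2$ and $j_1<j_2$ leaves only the $2\times2$ principal minors on consecutive indices. Those minors equal $as-b$ (indices $\{0,1\}$) and $s^2-t$ (indices $\{m,m+1\}$ with $m\ge1$), both of which are nonnegative under the hypotheses $as\ge b$ and $s^2\ge t$. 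Hence $J$ is TP$_2$, and part (i) follows.

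For part (ii) the hypothesis $s^2\ge4t$ lets me factor $1+sx+tx^2=(1+\alpha x)(1+\beta x)$ with $\alpha=\tfrac{s+\sqrt{s^2-4t}}{2}\ge\beta=\tfrac{s-\sqrt{s^2-4t}}{2}\ge0$, so that $\alpha+\beta=s$ and $\alpha\beta=t$. The key step is the factorization $J=PQ$, where
\[
P=\begin{pmatrix}
a&1&&&\\
b&\alpha&1&&\\
0&0&\alpha&1&\\
0&0&0&\alpha&1\\
&&&\ddots&\ddots
\end{pmatrix},\qquad
Q=\begin{pmatrix}
1&&&&\\
0&1&&&\\
0&\beta&1&&\\
0&0&\beta&1&\\
&&&\ddots&\ddots
\end{pmatrix},
\]
as one checks by a direct multiplication using $\alpha+\beta=s$ and $\alpha\beta=t$. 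Here $Q$ is a nonnegative lower bidiagonal matrix, hence totally positive by the standard fact that a matrix whose nonnegative entries all lie on the main diagonal and one adjacent diagonal is TP. To handle $P$, suppose first $a>0$ and eliminate its $(1,0)$ entry: $P=E\widehat P$, where $E$ is the identity matrix with the extra entry $b/a$ in position $(1,0)$ and
\[
\widehat P=\begin{pmatrix}
a&1&&&\\
0&\alpha-\tfrac{b}{a}&1&&\\
0&0&\alpha&1&\\
0&0&0&\alpha&1\\
&&&\ddots&\ddots
\end{pmatrix}.
\]
Then $E$ is TP and $\widehat P$ is a nonnegative upper bidiagonal matrix --- the only entry whose nonnegativity is not immediate is $\alpha-\tfrac{b}{a}$, and this is nonnegative exactly because $a\alpha\ge b$ --- so $\widehat P$, and hence $P=E\widehat P$, is TP. If instead $a=0$, then $a\alpha\ge b\ge0$ forces $b=0$ and $P$ is itself a nonnegative upper bidiagonal matrix, hence TP. In every case $P$ and $Q$ are TP, so $J=PQ$ is TP by Lemma~\ref{prod-lem}, and Theorem~\ref{grr-thm}(i) completes the proof.

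The main obstacle is discovering the factorization $J=PQ$ together with the refinement $P=E\widehat P$, and noticing that the nonnegativity of the single entry $\alpha-b/a$ is nothing but the inequality $a\alpha\ge b$ rewritten; once the factors are in hand the argument reduces to Lemma~\ref{prod-lem} and the elementary total positivity of nonnegative bidiagonal matrices. A minor point requiring attention is the degenerate case $a=0$ (and, for completeness, the observation that $R(a,b;s,t)$ is a proper Riordan array, which holds since $a_0=1\ne0$), but neither causes any real difficulty.
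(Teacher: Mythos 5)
Your proposal is correct, and for part (ii) it takes a genuinely different route from the paper. Both arguments begin the same way: identify $Z=(a,b,0,\dots)$, $A=(1,s,t,0,\dots)$, write down the tridiagonal coefficient matrix $J$, and reduce everything to Theorem~\ref{grr-thm}; your part (i) is exactly the paper's (the paper declares the TP$_2$ case ``obvious,'' and your localization of the only possibly negative $2\times2$ minors to the consecutive principal ones is the right way to spell that out). For the total positivity of $J$ in part (ii), however, the paper invokes the criterion that a nonnegative tridiagonal matrix is TP if and only if all its principal minors on consecutive rows and columns are nonnegative, then computes these minors via the three-term recurrence $d_n=sd_{n-1}-rtd_{n-2}$ and $D_n=ad_n-brd_{n-1}$, and finishes with a limit argument showing $d_n/d_{n-1}$ decreases to $\lambda=\bigl(s+\sqrt{s^2-4t}\bigr)/2$; this yields the stronger ``if and only if'' statement of the paper's Proposition. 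You instead exhibit an explicit factorization $J=E\widehat{P}Q$ into nonnegative bidiagonal matrices (after splitting $x^2-sx+t$ into its roots $\alpha,\beta$), each of which is TP, and conclude by Cauchy--Binet; the hypothesis $a\alpha\ge b$ appears precisely as nonnegativity of the single pivot $\alpha-b/a$, and you correctly dispose of the degenerate case $a=0$. Your factorization argument is shorter, avoids the determinant recurrence and the convergence analysis, and is in the spirit of the Loewner--Whitney/bidiagonal-factorization approach to total positivity, but it only delivers sufficiency --- which is all Theorem~\ref{s-t} requires --- whereas the paper's computation also characterizes exactly when $J$ is TP.
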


\begin{rem}
From Theorem \ref{s-t} it follows immediately
that the Pascal triangle and the Catalan triangle are totally positive,
and that the Catalan numbers, the Motzkin numbers,
the central binomial coefficients,
and the large Schr\"oder numbers are log-convex respectively.
\end{rem}
\begin{rem}
Let
$$H(a,b;s,t)=\left[r_{n+m,0}\right]_{n,m\ge 0}=
\left[
  \begin{array}{cccc}
    r_{0,0} & r_{1,0} & r_{2,0} & \cdots \\
    r_{1,0} & r_{2,0} & r_{3,0} & \cdots \\
    r_{2,0} & r_{3,0} & r_{4,0} & \cdots \\
    \vdots & \vdots & \vdots & \ddots \\
  \end{array}
\right]$$
be the Hankel matrix of the Catalan-like numbers $r_{n,0}$.
Aigner \cite{Aig99,Aig01} computed the determinants of the leading principal submatrices of $H$.
Aigner's Fundamental Theorem in \cite{Aig01} gives $H=RTR'$,
where $T={\rm diag}(1,t,t^2,t^3,\ldots)$.
So the total positivity of $R$ implies that of the Hankel matrix $H$.
In particular, if $s^2\ge 4t$ and $a\frac{s+\sqrt{s^2-4t}}{2}\ge b$,
then the Hankel matrix $H(a,b;s,t)$ is totally positive.
\end{rem}

By Theorem \ref{grr-thm}, 
to prove Theorem \ref{s-t},
it suffices to prove that
the coefficient matrix of $R(a,b;s,t)$
$$
\left[
\begin{array}{cccccc}
a & 1 &  &  & & \\
b & s & 1 & &\\
 & t & s & 1 &  &\\
 & & t & s & 1 & \\
& & &\ddots & \ddots  &  \ddots  \\
\end{array}
\right]$$
is TP$_2$ and TP under the conditions respectively.
We do this by establishing the following stronger result.

\begin{prop}
Let $a,b,r,s,t$ be five nonnegative numbers and
the Jacobi matrix
$$J=\left[
\begin{array}{cccccc}
a & r &  &  & & \\
b & s & r & &\\
 & t & s & r &  &\\
 & & t & s & r & \\
& & &\ddots & \ddots  &  \ddots  \\
\end{array}
\right].$$
Then
\begin{itemize}
  \item [\rm (i)] $J$ is TP$_2$ if and only if $as\ge br$ and $s^2\ge rt$.
  \item [\rm (ii)] $J$ is TP if and only if $s^2\ge 4rt$ and $a\left(s+\sqrt{s^2-4rt}\right)/2\ge br$.
\end{itemize}
\end{prop}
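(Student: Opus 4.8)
\emph{Plan.} I would treat the two directions of each part separately. Part (i) will follow from a finite inspection of $2\times 2$ minors; the substantive assertion is the ``if'' direction of (ii), which I intend to obtain from a bidiagonal $LU$-factorisation of $J$ together with Lemmas~\ref{lps-lem} and~\ref{prod-lem}, while the ``only if'' directions come from forcing a few explicit minors to be nonnegative.

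For part (i): $J$ is tridiagonal (the exceptional entries $a=J_{0,0}$ and $b=J_{1,0}$ still sit inside a band of width one), so in any $2\times 2$ submatrix at least one off-diagonal entry vanishes unless the rows are $\{0,1\}$ or $\{n,n+1\}$ with $n\ge 1$ and the columns are the corresponding pair. Running through the cases, every $2\times 2$ minor of $J$ is either a product of (nonnegative) entries, hence automatically $\ge 0$, or else equals $\det\left[\begin{smallmatrix} a & r\\ b & s\end{smallmatrix}\right]=as-br$ or $\det\left[\begin{smallmatrix} s & r\\ t & s\end{smallmatrix}\right]=s^2-rt$; hence $J$ is TP$_2$ if and only if $as\ge br$ and $s^2\ge rt$.

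For the ``if'' direction of (ii), assume $s^2\ge 4rt$ and $a\lambda_+\ge br$, where $\lambda_\pm=(s\pm\sqrt{s^2-4rt})/2\ge 0$ are the roots of $\lambda^2-s\lambda+rt=0$. I will write $J=LU$, where $U$ is upper bidiagonal with superdiagonal $(r,r,\dots)$ and diagonal $(u_0,u_1,\dots)$ and $L$ is unit lower bidiagonal with subdiagonal $(b/a,\,t/u_1,\,t/u_2,\dots)$; matching entries forces $u_0=a$, $u_1=s-br/a$, and $u_{n+1}=s-rt/u_n$ for $n\ge1$. The one thing to check is that all $u_n\ge 0$: the map $\phi(x)=s-rt/x$ is increasing on $(0,\infty)$ with fixed points $u_-\le u_+$, the ray $[u_-,\infty)$ is $\phi$-invariant, and $u_1=s-br/a\ge u_-$ is exactly the hypothesis $a\lambda_+\ge br$, so $u_n\ge u_-\ge 0$ for all $n$. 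Since a finite bidiagonal matrix with nonnegative entries is TP (it is a product of elementary bidiagonal and diagonal matrices, each trivially TP), each truncation $J_N=L_NU_N$ is TP by Lemma~\ref{prod-lem}, whence $J$ is TP by Lemma~\ref{lps-lem}. The degenerate cases $a=0$ or $rt=0$, in which some $u_n$ may vanish and the subdiagonal of $L$ must be read off directly, I would dispose of by a short separate argument (e.g.\ when $r=0$ the matrix is already lower bidiagonal).

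For the ``only if'' direction of (ii): if $J$ is TP, so is every submatrix of $J$, in particular the tridiagonal Toeplitz block on rows and columns $\ge 1$, whose leading principal minors $D_k$ obey $D_0=1$, $D_1=s$, $D_k=sD_{k-1}-rtD_{k-2}$; when $s^2<4rt$ the characteristic roots are non-real and $D_k$ becomes negative for some $k$, forcing $s^2\ge 4rt$. Likewise the leading principal minors $\Delta_k$ of $J$ satisfy $\Delta_1=a$, $\Delta_2=as-br$, and $\Delta_k=s\Delta_{k-1}-rt\Delta_{k-2}$ for $k\ge3$; solving this recurrence, the coefficient of the dominant root $\lambda_+^k$ is a positive multiple of $a\lambda_+-br$ (with the obvious modification when $\lambda_+=\lambda_-$), so $\Delta_k\ge 0$ for all $k$ forces $a\lambda_+\ge br$. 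The main obstacle is the pivot-positivity step in the ``if'' part of (ii)---pinning down the right $\phi$-invariant ray so that exactly the stated inequalities appear---together with the book-keeping for the degenerate cases; the rest is routine once the factorisation is in hand.
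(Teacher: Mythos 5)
Your argument is correct, but it reaches part (ii) by a genuinely different route from the paper. For the ``if'' direction the paper never factors $J$: it invokes the criterion that a nonnegative tridiagonal matrix is TP precisely when all its principal minors on consecutive rows and columns are nonnegative (Pinkus, Theorem 4.3), reduces those minors to the two families $d_n$ and $D_n=ad_n-brd_{n-1}$, and then shows that the ratio $d_n/d_{n-1}$ is nonincreasing (via $d_n^2-d_{n-1}d_{n+1}=(rt)^n$) with limit $\lambda=(s+\sqrt{s^2-4rt})/2$, so that both directions of (ii) drop out of the single equivalence ``all $d_n,D_n\ge 0$ iff $s^2\ge 4rt$ and $a\lambda\ge br$''. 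Your $LU$-decomposition into nonnegative bidiagonal factors, with the pivot recursion $u_{n+1}=s-rt/u_n$ kept above the fixed point $\lambda_-$, replaces that citation by Lemmas~\ref{lps-lem} and~\ref{prod-lem} plus the standard total positivity of bidiagonal matrices, which is arguably more self-contained; the identity $u_1\ge\lambda_-\Leftrightarrow a\lambda_+\ge br$ (from $\lambda_++\lambda_-=s$) is exactly right. Your ``only if'' direction, via the sign oscillation of the contiguous minors when $s^2<4rt$ and the dominant-root coefficient $a\lambda_+-br$ of $\Delta_k$, is essentially the paper's computation read backwards and only needs the trivial half of the tridiagonal criterion. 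The residual work in your write-up is the bookkeeping you already flag: the degenerate cases $a=0$ and $rt=0$ (where a pivot can vanish and $b/a$ or $t/u_n$ is undefined, though then $a\lambda_+\ge br$ forces $br=0$ and the matrix degenerates harmlessly), and the repeated-root or $\lambda_+=0$ cases in the ``only if'' recurrence; none of these is a real obstruction, but they should be written out.
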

\begin{proof}
(i) is obvious, and it remains to prove (ii).

Clearly, the tridiagonal matrix $J$ is TP if and only if
all its principal minors containing consecutive rows and columns are nonnegative
(see, e.g., \cite[Theorem 4.3]{Pin10}),
i.e., all determinants of forms
$$d_n=\det\left[
\begin{array}{ccccc}
s & r &  &  &  \\
t & s & r & \\
 & t & s & \ddots &  \\
&  &\ddots & \ddots  &  r  \\
& & & t & s \\
\end{array}
\right]_{n\times n}$$
and
$$D_n=\det\left[
\begin{array}{ccccc}
a & r &  &  &  \\
b & s & r & \\
 & t & s & \ddots &  \\
&  &\ddots & \ddots  &  r  \\
& & & t & s \\
\end{array}
\right]_{(n+1)\times (n+1)}$$
are nonnegative.
Note that all $d_n$ are nonnegative if and only if the Toeplitz matrix
$$\left[
\begin{array}{ccccc}
r &  &  & & \\
s & r & &\\
t & s & r &  &\\
& t & s & r & \\
& &\ddots & \ddots  &  \ddots  \\
\end{array}
\right]$$
is TP, i.e., the sequence $(r,s,t)$ is PF.
Hence all $d_n\ge 0$ if and only if $s^2\ge 4rt$.
We complete the proof of (ii) 
by showing that all $D_n\ge 0$ if and only if $(s+\sqrt{s^2-4rt})/2\ge br/a$.

Note that $D_0=a$ and $D_n=ad_{n}-brd_{n-1}$
by expanding the determinant along the first column,
where $d_0=1$ and $d_1=s$.
Hence all $D_n\ge0$ if and only if all $d_n/d_{n-1}\ge br/a$.
We next show that the sequence $d_n/d_{n-1}$ is nonincreasing and convergent to $(s+\sqrt{s^2-4rt})/2$,
which means that all $d_n/d_{n-1}\ge br/a$ is equivalent to $(s+\sqrt{s^2-4rt})/2\ge br/a$,
and so all $D_n\ge 0$ if and only if $(s+\sqrt{s^2-4rt})/2\ge br/a$.

Actually, we have by expanding the determinant along the first column 
\begin{equation}\label{d-r}
d_n=sd_{n-1}-rtd_{n-2},\quad d_0=1,d_1=s.
\end{equation}
Solving this linear recurrence relation we obtain
$$d_n=\sum_{k=0}^n\la^k\mu^{n-k},$$
where
$$\la=\frac{s+\sqrt{s^2-4rt}}{2},\quad \mu=\frac{s-\sqrt{s^2-4rt}}{2}$$
are two roots of the characteristic equation $x^2-sx+rt=0$ of (\ref{d-r}).

By (\ref{d-r}), we have
$$\left[
    \begin{array}{ll}
      d_{n} & d_{n+1} \\
      d_{n-1} & d_{n} \\
    \end{array}
  \right]
=\left[
    \begin{array}{rr}
      s & -rt \\
      1 & 0 \\
    \end{array}
  \right]\left[
    \begin{array}{ll}
      d_{n-1} & d_{n} \\
      d_{n-2} & d_{n-1} \\
    \end{array}
  \right].$$
It follows that $d_{n}^2-d_{n-1}d_{n+1}=rt(d_{n-1}^2-d_{n-2}d_{n})=\cdots=(rt)^{n-1}(d_1^2-d_0d_2)=(rt)^n\ge 0$.
Thus the sequence $d_n/d_{n-1}$ is nonincreasing, and is therefore convergent.
Let $\alpha$ be the limit.
Rewrite (\ref{d-r}) as
$$\frac{d_{n}}{d_{n-1}}=s-\frac{rt}{d_{n-1}/d_{n-2}}.$$
Take the limit to obtain $\alpha=s-rt/\alpha$, i.e., $\alpha^2-s\alpha+rt=0$,
so $\alpha=\la$ or $\mu$.
Now $d_1/d_0=s\ge \la$.
Assume that $d_{n-1}/d_{n-2}\ge \la$.
Then $$\frac{d_{n}}{d_{n-1}}=s-\frac{rt}{d_{n-1}/d_{n-2}}\ge s-\frac{rt}{\la}=\la.$$
Thus all $d_n/d_{n-1}\ge \la$.
It follows that the limit $\alpha=\la$ since $\la\ge \mu$, as desired.
\end{proof}

The second we concern about are Riordan arrays
whose $A$- and $Z$-sequences are identical or nearly so.
Following \cite{CKS12},
we say that a Riordan array $R=[r_{n,k}]_{n,k\ge 0}$ is {\it consistent} if $A=Z$.
We say that $R$ is a {\it quasi-consistent} Riordan array
if $A=(a_0,a_1,a_2,\ldots)$ and $Z=(a_1,a_2,\ldots)$.
In this case, we have
$$r_{n+1,k}=a_0r_{n,k-1}+a_1r_{n,k}+a_2r_{n,k+1}+\cdots$$
for all $n,k\ge 0$, where $r_{n,j}=0$ unless $0\le j\le n$.
For example,
the little Schr\"oder triangle is consistent,
the Pascal triangle, the Catalan triangle, the Motzkin triangle and the large Schr\"oder triangle are quasi-consistent,
and the ballot table is both consistent and quasi-consistent.
The following theorem gives a unified settle for the total positivity of
these well-known triangles.

\begin{thm}\label{mainthm}
Let $R$ be a consistent or quasi-consistent Riordan array. 
Suppose that the $A$-sequence of $R$ is PF$_r$  (PF, resp.).
Then $R$ is TP$_r$ (TP, resp.).
In particular, if the $A$-sequence of $R$ is log-concave,
then the $0$th column $(r_{n,0})_{n\ge 0}$ of $R$ is log-convex,
and each row $(r_{n,k})_{0\le k\le n}$ of $R$ is log-concave.
\end{thm}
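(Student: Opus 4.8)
The plan is to derive the total-positivity assertion from Theorem~\ref{grr-thm}(i) by analysing the coefficient matrix $J(R)$, and the log-convexity/log-concavity assertions from Theorem~\ref{grr-thm}(ii) together with the stability of log-concavity under correlation. Throughout, write $T=[a_{i-j}]_{i,j\ge0}$ for the Toeplitz matrix of the $A$-sequence; by hypothesis $T$ is TP$_r$ (TP, resp.), since $(a_n)$ is PF$_r$ (PF, resp.).

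First I would identify $J(R)$ in the two cases directly from (\ref{rrr-c}). For a quasi-consistent array $z_j=a_{j+1}$, so $J(R)=[a_{i-j+1}]_{i,j\ge0}$, which is exactly $T$ with its first row removed; being a submatrix of a TP$_r$ (TP) matrix, $J(R)$ is again TP$_r$ (TP). For a consistent array $z_j=a_j$, so $J(R)=[\,c\mid T\,]$, where $c=(a_0,a_1,a_2,\ldots)'$ is the first column of $T$; equivalently $J(R)=TB$, with $B$ the $0$–$1$ matrix whose successive columns are $e_0,e_0,e_1,e_2,\ldots$. Every minor of $B$ equals $0$ or $1$, so $B$ is TP, and $J(R)=TB$ is TP$_r$ (TP) by Lemma~\ref{prod-lem}. (Alternatively, argue by hand: a minor of $[\,c\mid T\,]$ involving the two equal leading columns vanishes, and every other minor is a minor of $T$.) In either case $R$ is TP$_r$ (TP, resp.) by Theorem~\ref{grr-thm}(i).

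For the ``in particular'' statement, let $(a_n)$ be log-concave, i.e.\ PF$_2$. Then $R$ is TP$_2$ by the above, and every $z_n\ge0$ (each $z_n$ is some $a_m$), so $(r_{n,0})_{n\ge0}$ is log-convex by Theorem~\ref{grr-thm}(ii). For the rows I would induct on $n$, exploiting that one row of $R$ is essentially the \emph{correlation} of the previous row with the $A$-sequence: from (\ref{rrr-c}), for a quasi-consistent array $r_{n+1,k}=\sum_{j\ge0}a_jr_{n,k-1+j}$ for all $k\ge0$ (with the convention $r_{n,-1}=0$), so $(r_{n+1,k})_{k\ge0}$ is the correlation of $(0,r_{n,0},\ldots,r_{n,n})$ with $(a_j)$; for a consistent array $r_{n+1,0}=r_{n+1,1}$ while $(r_{n+1,1},r_{n+1,2},\ldots)$ is the correlation of $(r_{n,0},\ldots,r_{n,n})$ with $(a_j)$. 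The crucial fact is that the correlation $\bigl(\sum_j u_{k+j}v_j\bigr)_k$ of two log-concave sequences is log-concave: reversing $u$ turns it into a reversal of the convolution $u^{R}*v$, and the convolution of two log-concave sequences is log-concave because its Toeplitz matrix is the product of the two (TP$_2$) Toeplitz matrices, by Lemma~\ref{prod-lem}, while reversing preserves log-concavity. In the quasi-consistent case, prepending a zero to a log-concave row leaves it log-concave, so this gives at once that row $n+1$ is log-concave. In the consistent case, $r_{n+1,0}=r_{n+1,1}$ is a repeated leading entry, so beyond log-concavity of $(r_{n+1,1},r_{n+1,2},\ldots)$ one also needs $r_{n+1,1}\ge r_{n+1,2}$; but the inductive hypothesis together with $r_{n,0}=r_{n,1}$ forces $r_{n,1}\ge r_{n,2}\ge\cdots$, whence $r_{n+1,1}-r_{n+1,2}=\sum_{j\ge1}a_j(r_{n,j}-r_{n,j+1})\ge0$ (base cases being immediate).

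The total-positivity half is essentially bookkeeping, once one sees that $J(R)$ is a submatrix of $T$ (quasi-consistent case) or a product of $T$ with a trivial TP matrix (consistent case). I expect the main obstacle to be the row log-concavity: it is not a consequence of $R$ being TP$_2$, and the argument needs both the correlation description of consecutive rows and the stability of log-concavity under convolution, with the consistent case demanding the extra boundary inequality — so one must keep careful track of the index shifts and of sequences with boundary zeros.
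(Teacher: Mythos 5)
Your proposal is correct and follows essentially the same route as the paper: identifying $J(R)$ as a column-augmented or row-shifted copy of the Toeplitz matrix of the $A$-sequence and feeding it into Theorem~\ref{grr-thm}, with your correlation-of-log-concave-sequences argument for the rows being exactly the paper's identity (\ref{row}) read as a product of TP$_2$ Toeplitz matrices. The only cosmetic difference is at the boundary of the consistent case, where the paper proves each row is nonincreasing via a telescoping identity for $t_k-t_{k+1}$, while you derive the single needed inequality $r_{n+1,1}\ge r_{n+1,2}$ directly; both work.
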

\begin{proof}
For a consistent or quasi-consistent Riordan array $R$,
its coefficient matrix is
$$
J(R)=
\left[
\begin{array}{ccccc}
a_0 & a_{0} &  &  &  \\
a_1 & a_{1} & a_{0} & \\
a_2 & a_{2} & a_{1} & a_{0} &  \\
\vdots &\vdots  & &   & \ddots   \\
\end{array}
\right]
\quad \textrm{or}\quad
\left[
\begin{array}{ccccc}
a_1 & a_{0} &  &  &  \\
a_2 & a_{1} & a_{0} & \\
a_3 & a_{2} & a_{1} & a_{0} &  \\
\vdots & \vdots & &   & \ddots   \\
\end{array}
\right]
$$
respectively.
Clearly, $J(R)$ is TP$_r$ if and only if
the Toeplitz matrix $[a_{i-j}]_{i,j\ge 0}$ of the sequence $(a_n)_{n\ge 0}$ is TP$_r$,
or equivalently, the sequence $(a_n)_{n\ge 0}$ is PF$_r$.
Thus it follows from Theorem~\ref{grr-thm}~(i) that the Riordan array $R$ is TP$_r$ (resp., TP)
if its $A$-sequence is PF$_r$ (resp. PF).

In particular,
if $(a_n)_{n\ge 0}$ is PF$_2$,
then $R$ is TP$_2$.
It follows from Theorem~\ref{grr-thm}~(ii) that the sequence $(r_{n,0})_{n\ge 0}$ is log-convex.
In what follows we show that each row of $R$ is log-concave by induction.
Denote $s_i=r_{n,i}$ for $0\le i\le n$ and $t_j=r_{n+1,j}$ for $0\le j\le n+1$.
We distinguish two cases.

First consider the case that $R$ is a consistent Riordan array.
In this case, $t_0=t_1$ and
$$t_{k}-t_{k+1}=a_0(s_{k-1}-s_{k})+\cdots+a_{n-k}(s_{n-1}-s_{n})+a_{n-k+1}s_{n}$$
for $1\le k\le n$.
It follows that each row of $R$ is nonincreasing by induction.
On the other hand,
\begin{equation}\label{row}
  \left[
    \begin{array}{llll}
      t_{n+1} &  &  &   \\
      t_{n} & t_{n+1} &  &   \\
       \vdots & \vdots & \ddots & \\
      t_1 & t_2 & \cdots & t_{n+1} \\
    \end{array}
  \right]
= \left[
    \begin{array}{llll}
      s_{n} &  &  &  \\
      s_{n-1} & s_{n} &  & \\
       \vdots & \vdots & \ddots & \\
      s_0 & s_1 & \cdots & s_{n} \\
    \end{array}
  \right]
   \left[
    \begin{array}{llll}
      a_{0} &  &  &  \\
      a_{1} & a_{0} &  &  \\
       \vdots & \vdots & \ddots & \\
      a_n & a_{n-1} & \cdots & a_0 \\
    \end{array}
  \right].
\end{equation}
Suppose that the sequence $(a_n)_{n\ge 0}$ is log-concave.
Then its Toeplitz matrix $\mathcal{A}=[a_{i-j}]_{i,j\ge 0}$ is TP$_2$,
and so are the leading principal submatrices of $\mathcal{A}$.
Thus the second matrix on the right hand side of (\ref{row}) is TP$_2$.
If the $n$th row $s_0,s_1,\ldots, s_n$ of $R$ is log-concave,
then so is the reverse sequence $s_n,\ldots,s_1,s_0$,
which implies that the first matrix on the right hand side of (\ref{row}) is TP$_2$.
Thus the matrix on the left hand side of (\ref{row}) is TP$_2$ by Lemma \ref{prod-lem},
or equivalently,
the sequence $t_{n+1},t_n,\ldots,t_1$ is log-concave,
and so is the reverse sequence $t_1,\ldots,t_n,t_{n+1}$.
Note that $t_0=t_1\ge t_2$.
Hence the sequence $t_0,t_1,t_2,\ldots,t_{n+1}$ is also log-concave.
Thus each row of $R$ is log-concave by induction.

Next let $R$ be a quasi-consistent Riordan array.
Then
$$
\left[
    \begin{array}{llll}
      t_{n+1} &  &  &  \\
      t_{n} & t_{n+1} &  &   \\
       \vdots & \vdots & \ddots & \\
       t_0 & t_1 & \cdots & t_{n+1}\\
    \end{array}
  \right]  \\
  =
\left[
    \begin{array}{lllll}
      s_{n} &  &  & & \\
      s_{n-1} & s_{n} &  & &\\
       \vdots & \vdots & \ddots & &\\
      s_0 & s_1 & \cdots & s_{n}& \\
       0&s_0 & \cdots & s_{n-1} & s_{n} \\
    \end{array}
  \right]
   \left[
    \begin{array}{llll}
      a_{0} &  &  &  \\
      a_{1} & a_{0} &  & \\
       \vdots & \vdots & \ddots &\\
            a_{n+1}&a_n  & \cdots &  a_0 \\
    \end{array}
  \right].
$$
Assume that the sequence $s_0,s_1,\ldots,s_n$ is log-concave.
Then the first matrix on the right hand side is TP$_2$.
It follows that the matrix on the left hand side is TP$_2$.
In other words, the sequence $t_0,t_1,\ldots,t_n,t_{n+1}$ is log-concave.
Thus each row of $R$ is log-concave by induction.
This completes the proof.
\end{proof}
\section{Further work}
\hspace*{\parindent}
It is known that sequences of binomial coefficients located in a ray or a transversal of the Pascal triangle
have various positivity properties (see \cite{SW08,Yu09} for instance).
Similar problems naturally arise in a Riordan array.
For example, in which case each row of such a Riordan array is PF,
the corresponding linear transformation can preserve the PF property (the log-concavity, the log-convexity, resp.),
and each column of the array is first log-concave and then log-convex?

Aigner \cite{Aig01} gave combinatorial interpretations for recursive matrices in terms of weighted Motzkin paths.
Cheon {\it et al.} \cite{CKS12} provided combinatorial interpretations for consistent Riordan arrays
in terms of weighted {\L}ukasiewicz paths.
It is not difficult to give a similar combinatorial interpretation for a quasi-consistent Riordan array.
Brenti \cite{Bre95} gave combinatorial proofs of total positivity of many well-known matrices
by means of lattice path techniques.
It is natural to ask for combinatorial proofs of Theorems \ref{s-t} and \ref{mainthm}.

\section*{Acknowledgement}

This work was supported in part by the National Natural Science Foundation of China (Grant Nos. 11071030, 11371078)
and the Specialized Research Fund for the Doctoral Program of Higher Education of China (Grant No. 20110041110039).



\begin{thebibliography}{99}
\bibitem{Aig99}
M. Aigner, Catalan-like numbers and determinants, J. Combin. Theory Ser. A 87 (1999) 33--51.
\bibitem{Aig01}
M. Aigner, Catalan and other numbers --- A recurrent theme,
in: H. Crapo, D. Senato (Eds.), Algebraic Combinatorics and Computer Science, Springer Italian, Milan, 2001, 347--390.
\bibitem{Aig08}
M. Aigner, Enumeration via ballot numbers, Discrete Math. 308 (2008) 2544--2563.
\bibitem{Bre89}
F. Brenti, Unimodal, log-concave and P\'olya frequency sequences in combinatorics,
Mem. Amer. Math. Soc. 413 (1989).
\bibitem{Bre95}
F. Brenti, Combinatorics and total positivity,
J. Combin. Theory Ser. A 71 (1995) 175--218.
\bibitem{CKS12}
G.-S. Cheon, H. Kim and L.W. Shapiro,
Combinatorics of Riordan arrays with identical $A$ and $Z$ sequences,
Discrete Math.  312 (2012) 2040--2049.
\bibitem{FJ11}
S.M. Fallat and C.R. Johnson,
Totally Nonnegative Matrices, Princeton University Press, Princeton, 2011.
\bibitem{FZ00}
S. Fomin and A. Zelevinsky,
Total positivity: tests and parameterizations,
Math. Intelligencer 22 (2000), 23-33.
\bibitem{He13}
T.-X. He, Parametric Catalan numbers and Catalan triangles,
Linear Algebra Appl. 438 (2013) 1467--1484.
\bibitem{HS09}
T.-X. He and R. Sprugnoli,
Sequence characterization of Riordan arrays,
Discrete Math.  309 (2009) 3962--3974.
\bibitem{Kar68}
S. Karlin, Total Positivity, Volume 1, Stanford University Press, 1968.
\bibitem{LW07a}
L.L. Liu and Y. Wang, On the log-convexity of combinatorial sequences,
Adv. in. Appl. Math. 39 (2007) 453--476.
\bibitem{LW07b}
L.L. Liu and Y. Wang, A unified approach to polynomial sequences with only real zeros,
Adv. in Appl. Math. 38 (2007) 542--560.
\bibitem{MRSV97}
D. Merlini, D.G. Rogers, R. Sprugnoli and M.C. Verri,
On some alternative characterizations of Riordan arrays,
Canad. J. Math.  49 (1997) 301--320.
\bibitem{Pin10}
A. Pinkus, Totally positive matrices,  
Cambridge University Press, Cambridge, 2010.
\bibitem{Rog78}
D.G. Rogers,
Pascal triangles, Catalan numbers and renewal arrays,
Discrete Math. 22 (1978) 301--310.
\bibitem{Sha76}
L.W. Shapiro, A Catalan triangle,
Discrete Math. 14 (1976) 83--90.
\bibitem{SGWW91}
L.W. Shapiro, S. Getu, W.-J. Woan and L.C. Woodson,
The Riordan group,
Discrete Appl. Math.  34 (1991) 229--239.
\bibitem{Slo}
N.J.A. Sloane, The On-Line Encyclopedia of Integer Sequences,
http://oeis.org.
\bibitem{Spr94}
R. Sprugnoli, Riordan arrays and combinatorial sums,
Discrete Math. 132 (1994) 267--290.
\bibitem{Spr11}
R. Sprugnoli, Combinatorial sums through Riordan arrays,
J. Geom. 101 (2011) 195--210.
\bibitem{SW08}
X.-T. Su and Y. Wang, On unimodality problems in Pascal's triangle,
Electron. J. Combin., 15 (2008), Research Paper 113, 12 pp.
\bibitem{WY05}
Y. Wang and Y.-N. Yeh, Polynomials with real zeros and P\'olya frequency sequences,
J. Combin. Theory Ser. A 109 (2005) 63--74.
\bibitem{WY07}
Y. Wang and Y.-N. Yeh, Log-concavity and LC-positivity,
J. Combin. Theory Ser. A 114 (2007) 195--210.
\bibitem{Yu09}
Y. Yu, Conforming two conjectures of Su and Wang on binonmial coefficients,
Adv. in Appl. Math. 43 (2009) 317--322.
\end{thebibliography}
\end{document}